\newtheorem{theorem}{Theorem}[section]
\newtheorem{lemma}[theorem]{Lemma}
\theoremstyle{definition}
\newtheorem{definition}[theorem]{Definition}
\newtheorem{example}[theorem]{Example}
\newtheorem{remark}[theorem]{Remark}
\numberwithin{equation}{section}
\title[  A string averaging method based on strictly quasi-nonexpansive ...]{ A string averaging method based on strictly quasi-nonexpansive operators with generalized relaxation}
\author[T. Nikazad]{Touraj Nikazad}
\address[T. Nikazad]{School of Mathematics, Iran University of Science and Technology, 16846-13114 Tehran, Iran}
\email{{\tt tnikazad@iust.ac.ir}}
\author[M. Mirzapour]{Mahdi Mirzapour$^{\ast}$}
\address[M. Mirzapour]{Department of Mathematics, Faculty of Sciences, Bu-Ali Sina University, 65178-38695 Hamedan, Iran}
\email{\tt m.mirzapour@basu.ac.ir}
\keywords{convex feasibility problem; cutter operator; strictly
quasi-nonexpansive; string averaging; generalized relaxation;
extrapolation}
\subjclass[2010]{90C25; 47J25; 49M20}
\thanks{$^{\ast}$Corresponding author}
\begin{document}

\begin{abstract}
We study a fixed point iterative method based on generalized
relaxation of strictly quasi-nonexpansive operators. The iterative
method is assembled by averaging of strings, and each string is
composed  of finitely many strictly quasi-nonexpansive operators.
To evaluate the study, we examine a wide class of iterative
methods for solving linear systems of equations (inequalities) and
the subgradient projection method for solving nonlinear convex
feasibility problems. The mathematical analysis is complemented by
some experiments in image reconstruction from projections and
classical examples, which illustrate the performance using
generalized relaxation.
\end{abstract}

\maketitle


\section{Introduction}
Finding a common point in finitely many closed convex sets,
which is called the convex feasibility problem,  arises in many
areas of mathematics and physical sciences. To solve such
problems, using iterative projection methods has been suggested by
many researchers, see, e.g., \cite{BB}. Since the computing of
projections is expensive, it is advised to use, easy computing
operators such as subgradient projections and $\mathcal{T}-$class
operator, which was introduced and investigated in \cite{BC} and
studied in several research works as \cite{BCk,CC} and references
therein. This class, which is named {\it cutter} by some authors,
see, e.g., \cite{CC}, contains projection operators, subgradient
projections, firmly nonexpansive operators, the resolvents of
 maximal monotone operators and strongly quasi-nonexpansive
operators but it is a subset of strictly quasi-nonexpansive (sQNE)
operators, see \cite[p. 810]{CC} and \cite{BCk,C2013} for more
details.

We concentrate on the set of sQNE operators, which involves cutter
and paracontracting operators. It should be noted that a wide
range of iterative methods, for solving linear systems of
equations, is based on sQNE operators, see \cite[lemma 4]{NDH2012}
and
\cite{CGG2001,CE2002,CEHN2008,DHC2009,aleyner2008block,EHL1981,E1980,EN2009,JW2003,bargetz2018linear,CRZ2020,CGRZ2020}.
Furthermore, applications of such iterations appear in signal
processing, system theory, computed tomography, proton
computerized tomography and other areas. Therefore, our analysis
can be applied to all the research works we mentioned.

Breaking huge-size problems into smaller ones is a natural way to
partially reduce computational time. Connecting small problems can
play an important role from a computational point of view. The
algorithmic structures are full or block sequential
(simultaneous) methods and more general constructions, see
\cite{string2001}, which is based on string averaging. These
kinds of algorithms are particularly suitable for parallel
computing and therefore have the ability to handle huge-size
problems such as deblurring problems, microscopy, medical and
astronomic imaging, geophysical applications, digital
tomosynthesis, and other areas. However, the string averaging
process was used and analyzed in many research works as
\cite{string2001,dynsumper2013,bauschke2004projection,inco-string,nikazad2016convergence,reich2016modular,BRZ2018}. They used a
relaxed version of the projection operators in the string
averaging procedure whereas we extend it to generalized relaxation
of sQNE operators. The generalized relaxation strategy is studied
for nonexpansive operators in \cite{C2010}, for cutter operators
in \cite{CC} and recently for strictly relaxed cutter operators in
\cite{NM2017Japan}. We have to mention that any strictly relaxed cutter
operator is strictly quasi-nonexpansive operator \cite[Remark
2.1.44.]{C2013} but the converse is not true in general, see
Example \ref{exam1}.

We analyze a fixed point iteration method based on generalized
relaxation of an sQNE operator which is constructed by averaging of
strings and each string is a composition of finitely many sQNE
operators. Our analysis indicates that the generalized relaxation
of cutter operators is inherently able to provide more acceleration
comparing with \cite{CC}.

The paper is organized as follows. In Section \ref{pre} we recall
some definitions and properties of sQNE operators. We
define a string averaging process and its convergence analysis in
Section \ref{main}. The applicability of the main result is examined
in Section \ref{apl} by employing state-of-the-art iterative
methods. Section \ref{num} presents some numerical experiments in
the field of image reconstruction from projections and classical
examples.

\section{Preliminaries and Notations}\label{pre}
Throughout this paper, we assume $T:\mathcal{H} \rightarrow
\mathcal{H}$ with nonempty a fixed point set, i.e., $Fix T\not =
\emptyset,$ where $\mathcal{H}$ is a Hilbert space. Also $Id$
denotes the identity operator on $\mathcal{H}$. First we recall some
definitions from \cite{C2013} which will be useful for our future
analysis.
\begin{definition}\label{tarifstrictly}
An operator $T$ is {\it quasi-nonexpansive} if
\begin{equation}\label{t1}
\|Tx-z\|\leq\|x-z\|
\end{equation}
for all $x\in \mathcal{H}$ and $z\in Fix T.$ Also, one may use the
term sQNE by replacing strict inequality in (\ref{tarifstrictly}),
i.e., $\|Tx-z\|<\|x-z\|$ for all $x\in \mathcal{H}\backslash Fix
T$ and $z\in Fix T.$ Moreover, a continuous sQNE operator is
called paracontracting, see \cite{CR1996,EKN}.
\end{definition}
\begin{remark}\label{rem1}
A simple calculation shows that the inequality
(\ref{tarifstrictly}) is equivalent with
$$\left<z-\frac{T(x)+x}{2} , T(x)-x\right>\geq 0.$$
\end{remark}
Another useful class of operators is the class of cutter operator,
namely
\begin{definition}
An operator $T:\mathcal{H} \rightarrow \mathcal{H}$ with nonempty
fixed point set is called  $cutter$ if
\begin{equation}\label{cutterdefinition}
\left<x-T(x), z-T(x)\right> \leq 0
\end{equation}
for all $x \in \mathcal{H}$ and all $z\in FixT$.
\end{definition}
However, the set of cutter operators is not necessarily closed
with respect to the composition of operators,  whereas the class of sQNE
operators is, see \cite[Theorem 2.1.26]{C2013}. Furthermore, based on \cite[Theorem 2.1.39 and Remark 2.1.44]{C2013}, any
cutter operator belongs to the set of sQNE operators.

We next give an example which is neither paracontracting operator
nor cutter operator but that is an sQNE operator.
\begin{example}\label{exam1}
Let $T:\mathbb{R} \rightarrow \mathbb{R}$  such that
\begin{displaymath}
T(x) = \left\{ \begin{array}{ll}
-\frac{1}{2}x, & x\in Q \vspace{0.1cm}\\
-\frac{1}{3}x, & x\in Q^c
\end{array} \right.
\end{displaymath}
where $Q$ and $Q^{c}$ denote rational and irrational numbers,
respectively. The discontinuity of the operator gives that the
operator is not paracontraction. We check the property of cutter
operators, namely,
\begin{equation}
\left< x-T(x), z-T(x)\right> \leq 0 ~for~ all ~z~\in~FixT=\{0\}
~\textrm{ and } x \in \mathbb{R}.
\end{equation}
Choosing $x=1$ and $z=0$ leads to $\left< 1-T(1), 0-T(1)\right>
=1/4
>0$. Therefore $T$ is not a cutter operator whereas
\begin{displaymath}
0< \left< 0- \displaystyle\frac{T(x)+x}{2}, T(x)-x\right>= \left\{
\begin{array}{ll}
\frac{3}{8}x^2, & x\in Q \vspace{0.1cm}\\
\frac{4}{9}x^2, & x\in Q^{c}
\end{array} \right.
\end{displaymath}
for all $x\in \mathcal{H}\backslash Fix T.$ Thus $T$ is an sQNE
operator.
\end{example}

We next give the definition of {\it generalized relaxation} of an
operator, see, e.g., \cite{C2010,CC2011}.
\begin{definition}\label{def1}
Let $T: \mathcal{H} \rightarrow \mathcal{H}$ and
$\sigma:\mathcal{H}\rightarrow (0,\infty)$ be a {\it step size
function.} The generalized relaxation of $T$ is defined by
\begin{equation}\label{t4}
T_{\sigma,\lambda}(x)=x+\lambda \sigma(x)(T(x)-x),
\end{equation}
where  $\lambda$ is a relaxation parameter in $[0,2].$
\end{definition}
\begin{remark}\label{rem0}
If $\lambda\sigma(x) \geq 1$ for all $x\in \mathcal{H}$, then the
operator $T_{\sigma,\lambda}$ is called an {\it extrapolation} of
$T$. For $\sigma(x)=1$ we get the relaxed version of $T,$ namely,
$T_{1,\lambda}=:T_{\lambda}.$ Furthermore, it is clear that
$T_{\sigma,\lambda}(x)=x+\lambda(T_{\sigma}(x)-x)$ where
$T_{\sigma}=T_{\sigma,1}$ and $Fix~T_{\sigma,\lambda}=Fix~T$ for
any $\lambda\neq0.$
\end{remark}
It is shown in \cite{BC} that the relaxed version of a cutter
operator is cutter where the relaxation parameters lie in $[0,1].$
A similar result can be deduced for the family of sQNE operators as
follows.
\begin{lemma}\label{lemma2}
If $T:\mathcal{H}\rightarrow \mathcal{H}$ is an sQNE operator then
$T_{\alpha}=(1-\alpha)Id+\alpha T$ is an sQNE operator for any
$\alpha \in (0,1].$
\end{lemma}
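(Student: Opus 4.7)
The plan is to verify the sQNE inequality for $T_\alpha$ by expanding the squared norm of $T_\alpha x - z$ as a convex combination and then invoking the strict inequality that already holds for $T$.

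First I would observe that $\operatorname{Fix} T_\alpha = \operatorname{Fix} T$ for every $\alpha \in (0,1]$: indeed, $T_\alpha x = x$ iff $\alpha(T x - x) = 0$ iff $T x = x$, so in particular the case $\alpha = 1$ is immediate since $T_1 = T$. It therefore suffices to handle $\alpha \in (0,1)$, fix an arbitrary $z \in \operatorname{Fix} T$ and $x \in \mathcal{H} \setminus \operatorname{Fix} T$, and show that $\|T_\alpha x - z\| < \|x - z\|$.

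Next I would write
\begin{equation*}
T_\alpha x - z = (1-\alpha)(x - z) + \alpha(T x - z)
\end{equation*}
and apply the standard identity $\|(1-\alpha)a + \alpha b\|^2 = (1-\alpha)\|a\|^2 + \alpha\|b\|^2 - \alpha(1-\alpha)\|a - b\|^2$ with $a = x - z$ and $b = T x - z$, which yields
\begin{equation*}
\|T_\alpha x - z\|^2 = (1-\alpha)\|x - z\|^2 + \alpha\|T x - z\|^2 - \alpha(1-\alpha)\|T x - x\|^2.
\end{equation*}
Since $T$ is sQNE and $x \notin \operatorname{Fix} T$, we have the strict inequality $\|T x - z\| < \|x - z\|$, and because $\alpha > 0$ this gives $\alpha\|Tx - z\|^2 < \alpha\|x - z\|^2$. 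Combining with the non-positivity of the last term (as $\alpha(1-\alpha) \geq 0$) produces $\|T_\alpha x - z\|^2 < \|x - z\|^2$, as required.

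There is no serious obstacle here; the only point deserving care is to ensure one uses the \emph{strict} rather than weak inequality on the middle term, which is where the hypothesis $\alpha \in (0,1]$ (and in particular $\alpha > 0$) enters in an essential way. The case $\alpha = 1$ is handled separately (and trivially) so that the multiplier $\alpha(1-\alpha)$ vanishing at the endpoint causes no trouble.
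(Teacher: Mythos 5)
Your proof is correct and is essentially the same computation as the paper's: the paper routes the argument through the inner-product reformulation of quasi-nonexpansiveness in Remark \ref{rem1} and extracts the nonnegative term $\zeta_\alpha(x)=\tfrac12\alpha(1-\alpha)\|T(x)-x\|^2$, while you apply the equivalent convexity identity $\|(1-\alpha)a+\alpha b\|^2=(1-\alpha)\|a\|^2+\alpha\|b\|^2-\alpha(1-\alpha)\|a-b\|^2$ directly to the norms, producing the same $\alpha(1-\alpha)\|T(x)-x\|^2$ term with the same sign advantage. Your handling of strictness via $\alpha>0$ and the observation $\operatorname{Fix}T_\alpha=\operatorname{Fix}T$ are both sound.
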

\begin{proof}
Using Remark \ref{rem1}, we have
\begin{eqnarray}
\left<z-\frac{T_{\alpha}(x)+x}{2},T_{\alpha}(x)-x\right>
&=&\left<z-x-\alpha\frac{T(x)-x}{2},\alpha(T(x)-x)\right>\nonumber\\
&=&\alpha\left<z-x,T(x)-x\right>-\frac{\alpha^2}{2}\|T(x)-x\|^2\nonumber\\
&=&\alpha\left<z-\frac{T(x)+x}{2},T(x)-x\right>+\zeta_{\alpha}(x)\label{t5}
\end{eqnarray}
where
$\zeta_{\alpha}(x)=\frac{1}{2}\alpha(1-\alpha)\|T(x)-x\|^2\geq 0$
for all $x\in \mathcal{H}\backslash Fix T$ and $z\in Fix
T,~\alpha\in (0,1].$ Since $T$ is an sQNE operator, the inner
product in (\ref{t5}) is positive which leads to the desired
result in the lemma.
\end{proof}
\begin{definition}\label{def2}
An operator $T: \mathcal{H} \rightarrow \mathcal{H}$ is {\it
demi-closed} at $0$ if for any weakly converging sequence
$x^k\rightharpoonup y\in \mathcal{H}$ with $T(x^k)\rightarrow 0$
we have $T(y)=0$.
\end{definition}
\begin{remark}\label{rem2}
It is well known, see \cite[p. 108]{C2013} that for a nonexpansive
operator $T: \mathcal{H} \rightarrow \mathcal{H}$, the operator $T
- Id$ is demi-closed at $0$.
\end{remark}

\section{Main result}\label{main}
In this section we present our main result consisting of  an algorithm
and its convergence analysis. The algorithm is based on
generalized relaxation of an sQNE operator which is formed by
averaging of finitely many operators. These operators are
composition of finitely many sQNE operators. Therefore the
operators of averaging process, which are resulted by strings, can
be simultaneously computed.

The string averaging algorithmic scheme is first proposed in
\cite{string2001}. Their analysis was based on the projection
operators, whereas the algorithm is defined for any operators, for
solving consistent convex feasibility problems. Studying the
algorithm in a more general setting as Hilbert space is considered
by \cite{bauschke2004projection}.  The inconsistent case is analyzed by
\cite{inco-string} and they proposed a general algorithmic scheme
for string averaging method without any convergence analysis. A
subclass of the algorithm is studied under summable perturbation
in \cite{sumper2007,BRZ2008,DHC2009}. A dynamic version of the algorithm
is studied in \cite{CA2014,BRZ2018}. In \cite{Gordon2005} the string
averaging method is compared with other methods for sparse linear
systems.

Recently, a perturbation resilience iterative method with an
infinite pool of operators is studied in \cite{NA2015} which
answers all mentioned open problems of \cite[Case II of Sections
2.2 and 4]{string2001} for consistence case whereas these problems
are partially answered by \cite{dynsumper2013}. Also,
the proposed general algorithmic scheme of \cite[Algorithm 3.3]{inco-string},
which was presented without any convergence analysis, is extended
with a convergence proof by \cite{NA2015}. Furthermore, the
research work \cite{NA2015} extends the results of the
block-iterative method mentioned in \cite{NDH2012}, which solves
linear systems of equations, with a convergence analysis.

All the above mentioned research works are based on projection
operators. In \cite{ys2008}, the algorithm is studied for cutter
operators and the sparseness of the operators is used in averaging
process. In \cite{Crombez2002,Crombez2004}, the string averaging
method is used for finding common fixed point of strict
paracontraction operators. We next reintroduce the string
averaging algorithm as follows.
\begin{definition}\label{tarifstring}
The string $I^{t}=(i_1^t,i^t_2,...,i_{m_t}^t)$ is an ordered
subset of $I=\{1,2,...,m\}$ such that $\bigcup^{E}_{t=1}I^t=I$.
Define
\begin{eqnarray}\label{tarifstring2}
U_{t}&= &T_{i^{t}_{m_{t}}}...T_{i^{t}_{2}}T_{i^{t}_{1}},~t=1, 2,
\cdots, E\nonumber\\
T&=& \displaystyle\sum_{t=1}^{E} \omega_{t}U_{t}
\end{eqnarray}
where $\omega_{t}>0$ and $\sum_{t=1}^{E} \omega_{t}=1$. Here
$T_{i\in I}$ are operators from a Hilbert space $\mathcal{H}$ into
$\mathcal{H}.$
\end{definition}
In this paper, we assume that all $T_{i\in I}$ of Definition
\ref{tarifstring} are sQNE operators on $\mathcal{H}$ and the
averaging process (\ref{tarifstring2}) is a special case of
\cite{string2001}.

\begin{remark}\label{rem3}
Note that all $\{U_t\}_{t=1}^E$ and consequently the operator $T$
are sQNE, see \cite[Theorem 2.1.26]{C2013}.
\end{remark}
Before we introduce our main algorithm, which is based on the
Definition \ref{tarifstring}, we give an important special case
which is studied by many authors. Let $E=1$ in Definition
\ref{tarifstring} and $B_t\subseteq J=\{1, 2, \cdots, M\}$ such
that $\bigcup_{t=1}^m B_t=J$. Therefore, it results $U_1=T_m\cdots
T_1$ and $T=U_1$. Defining
\begin{equation}\label{tt}
T_t=(1-\lambda_t)Id+\lambda_t(\sum_{i\in B_t}\omega_i L_i),
\end{equation}
leads to a sequential block iterative method. This iterative
method with an infinite pool of cutter operators was studied in
\cite{com2001} and was generalized in \cite{BCk}. Here $\lambda_t$
is relaxation parameter and $L_i:\mathcal{H}\rightarrow
\mathcal{H}.$ It is well known, see Remark \ref{rem3}, that if all
$\{L_i\}_{i\in J}$ are sQNE operators then $\{T_t\}_{t=1}^m$ of
(\ref{tt}) are sQNE operators.

We now consider the following algorithm which is based on
generalized relaxation of (\ref{tarifstring2}).

\begin{algorithm}[H]
\begin{algorithmic}[1]
\STATE {\bf Initialization:} Choose an arbitrary initial guess $x^{0}\in \mathcal{H}$.
\FORALL {$k\geq 0$}
\STATE{$x^{k+1}=T_{\sigma,\lambda_k}(x^k)$}.
\ENDFOR
\end{algorithmic}
\caption{}\label{algt1}
\end{algorithm}
We next show that the generalized relaxation operator $T_{\sigma,
1}=T_{\sigma}$ is an sQNE operator under a condition on
$\sigma(x)$. Indeed
\begin{lemma}\label{lemma3}
Let $T_{\sigma}$ be a generalized relaxation of $T=\sum_{t=1}^{E}
\omega_{t}U_{t}.$ Then $T_{\sigma}$ is an sQNE operator if $0
<\sigma(x)<\frac{\sum^{E}_{t=1}w_{t}\|U_{t}(x)-x\|^{2}}{\|T(x)-x\|^{2}}$
where $x\in \mathcal{H}\backslash Fix T.$ Furthermore, the step
size function
\begin{displaymath}
\sigma_{max}(x) := \left\{ \begin{array}{ll}
\frac{\sum^{E}_{t=1}w_{t}\|U_{t}(x)-x\|^{2}}{\|T(x)-x\|^{2}}, & x\in \mathcal{H}\backslash Fix T \vspace{0.1cm}\\
1, & x\in Fix T
\end{array} \right.
\end{displaymath}
is bounded below by $1.$
\end{lemma}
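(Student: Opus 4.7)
The plan is to invoke the characterization of sQNE operators from Remark \ref{rem1}: since $\sigma(x)>0$ forces $Fix\,T_{\sigma}=Fix\,T$, it is enough to establish
\[
\left\langle z-\tfrac{T_{\sigma}(x)+x}{2},\,T_{\sigma}(x)-x\right\rangle > 0
\]
for every $x\in \mathcal{H}\setminus Fix\,T$ and every $z\in Fix\,T$. Because $T_{\sigma}(x)-x=\sigma(x)\bigl(T(x)-x\bigr)$, a direct expansion rewrites the quantity above as
\[
\sigma(x)\,\langle z-x,\,T(x)-x\rangle \;-\; \tfrac{\sigma(x)^{2}}{2}\,\|T(x)-x\|^{2}.
\]

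The central step is to bound $\langle z-x,\,T(x)-x\rangle$ from below. I would apply Remark \ref{rem1} to each $U_{t}$, which is sQNE by Remark \ref{rem3}, obtaining $\langle z-\tfrac{U_{t}(x)+x}{2},\,U_{t}(x)-x\rangle\ge 0$, or equivalently $\langle z-x,\,U_{t}(x)-x\rangle\ge \tfrac{1}{2}\|U_{t}(x)-x\|^{2}$. Taking the convex combination with weights $\omega_{t}$ and using $T=\sum_{t}\omega_{t}U_{t}$ together with $\sum_{t}\omega_{t}=1$ produces
\[
\langle z-x,\,T(x)-x\rangle \;\ge\; \tfrac{1}{2}\sum_{t=1}^{E}\omega_{t}\,\|U_{t}(x)-x\|^{2}.
\]
Substituting back yields the lower bound $\tfrac{\sigma(x)}{2}\Bigl(\sum_{t}\omega_{t}\|U_{t}(x)-x\|^{2}-\sigma(x)\|T(x)-x\|^{2}\Bigr)$, which is strictly positive exactly under the hypothesis $\sigma(x)<\sigma_{\max}(x)$, keeping in mind that $\|T(x)-x\|>0$ whenever $x\notin Fix\,T$.

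For the second assertion, I would apply convexity of $\|\cdot\|^{2}$ (Jensen's inequality) to the representation $T(x)-x=\sum_{t=1}^{E}\omega_{t}\bigl(U_{t}(x)-x\bigr)$, which gives $\|T(x)-x\|^{2}\le \sum_{t=1}^{E}\omega_{t}\|U_{t}(x)-x\|^{2}$. Dividing shows that the ratio defining $\sigma_{\max}(x)$ on $\mathcal{H}\setminus Fix\,T$ is at least $1$; on $Fix\,T$ the value $\sigma_{\max}(x)=1$ holds by definition, giving the global bound $\sigma_{\max}(x)\ge 1$.

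The only non-routine point is the passage from the sQNE property of the individual $U_{t}$ to a pooled inequality on $T$. Once the sQNE characterization of Remark \ref{rem1} is rewritten so that the term $\tfrac{1}{2}\|U_{t}(x)-x\|^{2}$ is isolated, averaging with the weights $\omega_{t}$ is immediate and all remaining steps are algebraic. No continuity, compactness, or iteration-level arguments are needed, so I expect the proof to be short and self-contained.
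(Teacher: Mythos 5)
Your proposal is correct and follows essentially the same route as the paper: the same expansion of $\bigl\langle z-\tfrac{T_{\sigma}(x)+x}{2},\,T_{\sigma}(x)-x\bigr\rangle$ via Remark \ref{rem1}, the same pooled inequality $\langle z-x,\,T(x)-x\rangle\ge\tfrac12\sum_t\omega_t\|U_t(x)-x\|^2$ obtained by averaging the sQNE characterization of each $U_t$, and the same use of convexity of $\|\cdot\|^2$ to get $\sigma_{\max}\ge 1$. The only cosmetic difference is that the paper carries a strict inequality in the pooled estimate while you recover strictness from the hypothesis $\sigma(x)<\sigma_{\max}(x)$; both are valid.
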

\begin{proof}
For $z\in Fix~T_{\sigma}$ and $x\in \mathcal{H}\backslash
Fix~T_{\sigma}$ we obtain
\begin{eqnarray}
\left<z-\frac{T_{\sigma}(x)+x}{2},T_{\sigma}(x)-x\right>&=&
\left<z-x-\frac{\sigma(x)(T(x)-x)}{2},T_{\sigma}(x)-x\right>\nonumber\\
&=&\left<z-x,T_{\sigma}(x)-x\right>+\nonumber\\
&&~~~~~~~-\left<\frac{\sigma(x)(T(x)-x)}{2},T_{\sigma}(x)-x\right>\nonumber\\
&=&\sigma(x)\left<z-x,T(x)-x\right>+\nonumber\\
&&~~~~~~~-\frac{1}{2}\sigma^2(x)\|T(x)-x\|^{2}.\label{t14}
\end{eqnarray}
Since $\sum^{E}_{t=1}w_{t}=1,$
$Fix~T_{\sigma}=Fix~T=\bigcap_{t=1}^E Fix~U_t$  and all
$\{U_{t}\}_{t=1}^E$ are sQNE operators, see Remark \ref{rem3}, we
have
\begin{eqnarray}
\left<z-x,T(x)-x\right> &=&\left<z-x,\sum^{m}_{t=1}w_{t}U_{t}(x)-x\right>\nonumber\\
&=& \sum^{E}_{t=1}w_{t} \left<z-x,U_{t}(x)-x\right>\nonumber\\
&=& \sum^{E}_{t=1}w_{t} \left<z-\frac{U_{t}(x)+x}{2}+\frac{U_{t}(x)-x}{2},U_{t}(x)-x\right>\nonumber\\
&>&\frac{1}{2}\sum^{E}_{t=1}w_{t}\|U_{t}(x)-x\|^{2} \label{t8}\\
&\geq& \frac{1}{2}\|\sum^{E}_{t=1}w_{t}(U_{t}(x)-x)\|^{2} (\textrm{by convexity of $\|.\|^2$})\nonumber\\
&=&\frac{1}{2}\|\sum^{E}_{t=1}w_{t}U_{t}(x)-x\|^{2}=\frac{1}{2}\|T(x)-x\|^{2}.\label{t11}
\end{eqnarray}
Combining (\ref{t14}) and (\ref{t8}), we obtain
$\left<z-\frac{T_{\sigma}(x)+x}{2},T_{\sigma}(x)-x\right>>0$ which
shows $T_{\sigma}$ is an sQNE operator. Furthermore, the lower
bound for $\sigma_{max}$ is derived by (\ref{t8}) and (\ref{t11}).
\end{proof}

\begin{theorem}\label{theorem2}
Let $\sigma=\sigma_{max}$ be step-size function and
$\lambda_{k}\in[\varepsilon,1-\varepsilon]$ for an arbitrary
constant $\varepsilon\in(0,\frac{1}{2})$. The generated sequence
of Algorithm \ref{algt1} weakly converges to a point in $Fix T$,
if one of the following conditions is satisfied:
\begin{itemize}
\item[(i)] $T-Id$ is demi-closed at $0$, or
\item[(ii)] $U_{t} -Id$ are demi-closed at $0$, for all
$t=1, 2, \cdots, E.$
\end{itemize}
\end{theorem}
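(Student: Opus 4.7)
The plan is to run the standard Fejér-monotonicity + Opial weak-convergence scheme, feeding it the quantitative descent inequality that the choice $\sigma=\sigma_{\max}$ produces. First I would unpack the update as $x^{k+1}-x^k=\lambda_k\sigma(x^k)(T(x^k)-x^k)$, expand $\|x^{k+1}-z\|^2$ for an arbitrary $z\in Fix\,T=\bigcap_{t=1}^E Fix\,U_t$, and control the cross term by the non-strict version of inequality (\ref{t8})—each $U_t$ is sQNE (Remark \ref{rem3}), so by Remark \ref{rem1}
\[
\langle z-x^k,U_t(x^k)-x^k\rangle\;\ge\;\tfrac12\|U_t(x^k)-x^k\|^2,
\]
and averaging with weights $w_t$ gives $\langle z-x^k,T(x^k)-x^k\rangle\ge\tfrac12\sum_t w_t\|U_t(x^k)-x^k\|^2=\tfrac12\sigma_{\max}(x^k)\|T(x^k)-x^k\|^2$. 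Substituting this into the expansion should yield the clean recursion
\[
\|x^{k+1}-z\|^2\;\le\;\|x^k-z\|^2-\lambda_k(1-\lambda_k)\sigma^2(x^k)\|T(x^k)-x^k\|^2.
\]

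Given this recursion, the Fejér monotonicity of $\{x^k\}$ with respect to $Fix\,T$ is immediate, which in turn gives boundedness. Because $\lambda_k(1-\lambda_k)\ge\varepsilon(1-\varepsilon)>0$ and $\sigma(x^k)\ge 1$ by Lemma \ref{lemma3}, telescoping yields $\sum_k\|T(x^k)-x^k\|^2<\infty$, hence $\|T(x^k)-x^k\|\to 0$. For the stronger conclusion needed in case (ii), I would observe that $\sigma^2(x^k)\|T(x^k)-x^k\|^2=\sigma(x^k)\sum_t w_t\|U_t(x^k)-x^k\|^2\ge\sum_t w_t\|U_t(x^k)-x^k\|^2$, so the same telescoping gives $\|U_t(x^k)-x^k\|\to 0$ for every $t$.

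Next I would pass to weak cluster points. Any bounded sequence in a Hilbert space has a weakly convergent subsequence $x^{k_j}\rightharpoonup y$. Under hypothesis (i), demi-closedness of $T-Id$ at $0$ combined with $T(x^{k_j})-x^{k_j}\to 0$ gives $y\in Fix\,T$ directly. Under hypothesis (ii), demi-closedness of each $U_t-Id$ at $0$ together with $U_t(x^{k_j})-x^{k_j}\to 0$ yields $y\in Fix\,U_t$ for every $t$, hence $y\in\bigcap_t Fix\,U_t\subseteq Fix\,T$. A standard Opial-type argument (a Fejér-monotone sequence with respect to a set $C$ whose weak cluster points lie in $C$ converges weakly to an element of $C$) then forces the whole sequence to converge weakly to a single point of $Fix\,T$.

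The part that needs the most care, and that I would expect to be the main obstacle, is exactly the quantitative descent step: because $\sigma_{\max}$ is borderline (it saturates the bound in Lemma \ref{lemma3}), the strict inequality in (\ref{t8}) collapses in the limit and one must make sure the non-strict per-$U_t$ sQNE bound above—not the stricter aggregated one coming via convexity in (\ref{t11})—is the one used, so that the term $-\lambda_k(1-\lambda_k)\sigma^2(x^k)\|T(x^k)-x^k\|^2$ survives. Everything downstream is routine: Fejér monotonicity, summability, demi-closedness, Opial.
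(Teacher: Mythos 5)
Your proposal is correct and follows essentially the same route as the paper: the same expansion of $\|x^{k+1}-z\|^2$, the same descent inequality $\|x^{k+1}-z\|^2\le\|x^k-z\|^2-\lambda_k(1-\lambda_k)\sigma^2(x^k)\|T(x^k)-x^k\|^2$ (the paper's (\ref{t10}), which it obtains by routing the cross-term estimate through Lemma \ref{lemma3} rather than inlining the per-$U_t$ sQNE bounds), the same use of $\sigma\geq 1$ for case (i) and of $\sigma^2(x^k)\|T(x^k)-x^k\|^2\geq\sum_t w_t\|U_t(x^k)-x^k\|^2$ for case (ii), and the same Fej\'{e}r-monotonicity/demi-closedness argument (the paper cites \cite[Theorem 2.16 (ii)]{BB} for the Opial-type step). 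Your closing caution about using the non-strict per-operator inequality is exactly the right resolution and matches what the paper's computation actually relies on.
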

\begin{proof}
We first show $\{x^k\}$ is Fej\'{e}r monotone with respect to $Fix
T,$ namely, $\|x^{k+1}-z\|\leq \|x^{k}-z\|$ for any $z\in Fix T.$
Using Remark \ref{rem0} and for $z\in Fix T, x^k\in
\mathcal{H}\backslash Fix T$ we have
\begin{eqnarray}\label{aslih2}
\|x^{k+1}-z\|^2 &=& \|T_{\sigma,\lambda_k}(x^k)-z\|^2\nonumber\\
&=&\|x^{k}+\lambda_{k}\sigma(x^k)(T(x^k)-x^{k})-z\|^2\nonumber\\
&=& \|x^{k}+\lambda_{k}(T_{\sigma}(x^k)-x^{k})-z\|^2\nonumber\\
&=&\|x^{k}-z\|^2 +\xi_k
+\lambda^{2}_{k}\|T_{\sigma}(x^k)-x^{k}\|^2
\end{eqnarray}
where $\xi_k=2 \lambda_{k}\left<x^k
-z,T_{\sigma}(x^k)-x^{k}\right>.$ Using  Lemma \ref{lemma3}, we
obtain
$$\left<\frac{T_{\sigma}(x^k)+x^k}{2}
-z,T_{\sigma}(x^k)-x^{k}\right>\leq 0$$ and therefore
\begin{eqnarray}\label{aslih22}
\xi_k&=&2\lambda_{k}\left<x^k-
\frac{T_{\sigma}(x^k)}{2} + \frac{T_{\sigma}(x^k)}{2} -z,T_{\sigma}(x^k)-x^{k}\right>\nonumber\\
&\leq& -\lambda_{k}\|T_{\sigma}(x^k)-x^{k}\|^2.
\end{eqnarray}
Now using (\ref{aslih2}), (\ref{aslih22}) and Lemma \ref{lemma3},
we obtain
\begin{eqnarray}
\|x^{k+1}-z\|^2 &\leq&\|x^{k}-z\|^2 -\lambda_{k}\|T_{\sigma}(x^k)-x^{k}\|^2+\lambda^{2}_{k}\|T_{\sigma}(x^k)-x^{k}\|^2\nonumber\\
&=& \|x^{k}-z\|^2  - (\lambda_{k}-\lambda^{2}_{k})\sigma^{2}(x^k)\|T(x^k)-x^{k}\|^2 \label{t10}\\
&\leq& \|x_{k}-z\|^2 -\lambda_{k}(1-\lambda_{k})\|T(x^k)-x^{k}\|^2 \nonumber\\
&=&\|x_{k}-z\|^2 -\lambda_{k}(1-\lambda_{k})
\left\|\sum^{E}_{t=1}w_{t}U_{t}(x^k)-x^{k}\right\|^2\label{t7}.
\end{eqnarray}
Therefore, the sequence $\left\{\|x^{k}-z\|\right\}$ decreases and
consequently $\left\{x^{k}\right\}$ is bounded. Using (\ref{t7}),
one easily gets
\begin{equation}\label{demiclosedT}
\|T(x^k)-x^k\|\rightarrow 0
\end{equation}
as $k\rightarrow \infty.$ Using \cite[Theorem 2.16 (ii)]{BB}, the
sequence $\left\{x^{k}\right\}$ has at most one weak cluster point
$x^*\in \mathcal{H}.$ Assume $\left\{x^{n_k}\right\}$ be a
subsequence of $\left\{x^{k}\right\}$ which weakly converges to
$x^*.$ Using (\ref{demiclosedT}) and demi-closedness of $T-Id$ at
$0,$ we have $x^{*}\in FixT$ which implies all weak cluster points
of $\left\{x^{k}\right\}$ lie in $FixT.$ Again using \cite[Theorem
2.16 (ii)]{BB} we conclude that the sequence
$\left\{x^{k}\right\}$ weakly converges to $x^{*}.$

We next assess the second part of theorem. Using (\ref{t8}) and
(\ref{t11}), one may rewrite (\ref{t10}) as
\begin{eqnarray}
\|x^{k+1}-z\|^2 &<&\|x^{k}-z\|^2  -
\lambda_{k}(1-\lambda_{k})\frac{\left(\sum^{E}_{t=1}w_{t}\|U_{t}(x^k)-x^k\|^{2}\right)^2}{\|T(x^k)-x^{k}\|^2}\nonumber\\
&\leq &\|x^{k}-z\|^2  -
\lambda_{k}(1-\lambda_{k})\sum^{E}_{t=1}w_{t}\|U_{t}(x^k)-x^k\|^{2}\label{t12}
\end{eqnarray}
which gives $\|U_{t}(x^k)-x^k\|\rightarrow 0$ as $k\rightarrow
\infty$ for $t=1, \cdots, E.$ As in the first part of the proof,
let $\left\{x^{n_k}\right\}$ be a subsequence of
$\left\{x^{k}\right\}$ which weakly converges to a weak cluster
point $x^*\in \mathcal{H}.$ Similarly, demi-closedness of
$\{U_t-Id\}_{t=1}^E$ leads to $x^*\in Fix U_t$ for all $t=1,
\cdots, E.$ Thus $x^*\in\bigcap_{t=1}^E Fix U_t$ and we obtain
that, using \cite[Theorem 2.16 (ii)]{BB}, the sequence
$\left\{x^{k}\right\}$ weakly converges to $x^{*}.$
\end{proof}

We next compare results of \cite[Theorem 9]{CC} with the above
theorem. Therefore we consider one string, i.e., $E=1$, see
Definition \ref{tarifstring}. Note that in the case $E=1$ we have
$\sigma_{max}=1$ which means no generalized relaxation process is
affected, but we still have a better error reduction than what was
reported in \cite[Theorem 9]{CC}. Putting $e_k:=\|x^k-z\|^2$ in
(\ref{t7}) leads to the difference of successive errors as follows
\begin{eqnarray}
e_{k}-e_{k+1}>\lambda_{k}(1-\lambda_{k})
\|T(x^k)-x^{k}\|^2.\label{t15}
\end{eqnarray}

Based on \cite[Theorem 9]{CC}, the inequality (\ref{t15}) is
replaced by
\begin{eqnarray}
e_{k}-e_{k+1}>\frac{\lambda_{k}(2-\lambda_{k})}{4m^2}
\|T(x^k)-x^{k}\|^2\label{t150}
\end{eqnarray}
where $\lambda_k\in[\varepsilon,2-\varepsilon]$ for an arbitrary
constant $\varepsilon\in(0,1).$ Since the set of cutter operators
is a subset of sQNE operators and comparing lower bounds
(\ref{t15}) and (\ref{t150}), we conclude that the generalized
relaxation of a cutter operator has the ability of faster
reduction in error.

\section{Applications}\label{apl}
In this section we reintroduce some state-of-the-art iterative
methods which are based on sQNE operators. The section consists of
 two parts. The First one covers a wide class of block iterative
projection methods
 for solving linear equations and/or inequalities.
 In the next part, the operators
$U_{t}$ of Definition \ref{tarifstring} are replaced by the
 operators which are based on the parallel subgradient
 projection method. 
\subsection{Block iterative method}
First, we begin with block iterative methods which are used for
solving linear systems of equations (inequalities). Let
$A\in\mathbb{R}^{m\times n}$ and $b\in\mathbb{R}^m$ be given. We
assume the consistent linear system of equations
\begin{equation}\label{block1ap}
Ax = b.
\end{equation}
Let $A$ and $b$ be partitioned into $p$ row-blocks
$\{A_t\}_{t=1}^p$ and $\{b^t\}_{t=1}^p$ respectively. When $p = 1$
the method is called fully simultaneous iteration. On the other
hand, for $p = m$ we get a fully sequential iteration. Consider
the following algorithm which are studied in several research
works as
\cite{NDH2012,CGG2001,CE2002,CEHN2008,DHC2009,EHL1981,E1980,EN2009,JW2003}.
\begin{algorithm}[H]
\begin{algorithmic}[1]
\STATE {\bf Initialization:} Choose an arbitrary initial guess $x^{0}\in \mathbb{R}^{n}$
\FORALL {$k\geq 0$}
\STATE{$x^{k,0}=x^k$}
\FOR{$t=1$ \TO p}
\STATE{\begin{eqnarray}
x^{k,t}&=&x^{k,t-1}+\lambda_t A_{t}^T M_{t}(b^t-A_t
x^{k,t-1})\nonumber\\
&=&T_t(x^{k,t-1})\nonumber
\end{eqnarray}}
\ENDFOR
\STATE{$x^{k+1}=x^{k,p}$}
\ENDFOR
\end{algorithmic}
\caption{Sequential Block Iteration}\label{alg:alg2ap}
\end{algorithm}

where
\begin{equation}\label{t8ap}
T_t(x)=x+\lambda_t A_{t}^T M_{t}(b^t-A_t x).
\end{equation}
Here $\{\lambda_t\}_{t=1}^p$ and $\{M_t\}_{t=1}^p$ are relaxation
parameters and symmetric positive definite weight matrices
respectively. If $0<\varepsilon\leq
\lambda_t\leq\frac{2-\varepsilon}{\rho(A_t^T M_t A_t)} \textrm{
for } t=1,\dots, p $, where $\rho(B)$ denotes the spectral radius
of $B,$ then using \cite[Lemmas 3 and 4]{NDH2012} we conclude that
the operator $T_t$ in (\ref{t8ap}) is not only an sQNE operator but
also is nonexpansive. Since the set of nonexpansive operators is
closed with respect to composition and convex combination of
operators, we get that the operators $T-Id$ and
$\{U_t-Id\}_{t=1}^E$ are nonexpansive. Therefore, based on Remark
\ref{rem2}, both conditions $(i)$ and $(ii)$ of Theorem
\ref{theorem2} are satisfied. Therefore $T_t$ can be applied in
Algorithm \ref{algt1} without losing convergence result of Theorem
\ref{theorem2}.

\subsection{Subgradient projection}\label{subgradient:section}
We next replace $T_t$ of Definition \ref{tarifstring} by the
subgradient projection operator and verify the conditions of
Theorem \ref{theorem2}. Let $i\in J=\{1,2,\cdots, M\}$, the index
set, and $g_i :D \subseteq \mathbb{R}^{n} \rightarrow \mathbb{R}$
be convex functions. We consider consistent system of convex
inequalities
\begin{equation}\label{funcap}
g_i(x)\leq 0,~~~\textrm{ for } i\in J
\end{equation}
Let $g^{+}_{i}(x)= max \{ 0 , g_{i}(x)\}$, and denote the solution
set of (\ref{funcap}) by $S=\{x | g_{i}(x)\leq 0,~i\in J \}.$ Then
$g^{+}_{i}(x)$ is a convex function and
\begin{equation}\label{moadelap}
S=\{x | g^{+}_{i}(x)= 0,~i\in J\}.
\end{equation}

We divide the problem (\ref{moadelap}) into $m$ blocks as follows.
Set $B_{t} \subseteq J$ such that $\bigcup^{m}_{t=1} B_{t}=J$. Let
$\ell_{i}(x)$ and $\partial g_{i}^{+}(x)$ denote subgradient and
set of all subgradients of $g_i$ at $x$ respectively. Here a
vector $t\in\mathbb{R}^n$ is called subgradient of a convex
function $g$ at a point $y\in \mathbb{R}^n$ if $\langle t,
x-y\rangle \leq g(x)-g(y)$ for every $x\in \mathbb{R}^n.$ It is
known that the subgradient of a convex function always exist. We
first consider the following operators which are used in cyclic
subgradient projection method, see \cite{cl81},
\begin{equation}\label{tt1}
T_{t}(x)=x-\mu_{t}\frac{g_{t}^{+}(x)}{\|\ell_{t}(x)
\|^2}\ell_{t}(x).
\end{equation}
Based on analysis of \cite[Section 4]{CC}, see also \cite[Theorem
4.2.7]{C2013}, $T_t-Id$ is demi-closed at $0$ under a mild
condition which holds for any finite dimensional spaces. Again
using \cite[Theorem 4.2.7]{C2013}, one easily obtains that
$T_t-Id$ is demi-closed at $0$ where
\begin{equation}\label{general1ap}
T_{t}(x)=x-\mu_{t}\sum_{i\in B_{t}} \omega_{i}
\frac{g_{i}^{+}(x)}{\|\ell_{i}(x) \|^2}\ell_{i}(x)
\end{equation}
with $\sum\limits_{i \in {B_t}} \omega _i = 1,~\omega _i> 0$ and
defined $\mu_{t}$ in (\ref{mu}). We next show both operators of
(\ref{tt1}) and (\ref{general1ap}) are sQNE operators. One easily
gets
\begin{equation}\label{t1ap}
\|T_t(x)-z\|^2\leq \|x-z\|^2-2\mu_t \sum_{i\in B_t}\omega_i
\frac{(g_{i}^{+}(x))^2}{\|\ell_{i}(x) \|^2}+\mu_t^2 \|\sum_{i\in
B_t}\omega_i \frac{g_{i}^{+}(x)}{\|\ell_{i}(x)
\|^2}\ell_{i}(x)\|^2.
\end{equation}
By choosing
\begin{equation}\label{mu}
\mu_t=\frac{\sum_{i\in B_t}\omega_i
\frac{(g_{i}^{+}(x))^2}{\|\ell_{i}(x) \|^2}}{\|\sum_{i\in
B_t}\omega_i \frac{g_{i}^{+}(x)}{\|\ell_{i}(x)
\|^2}\ell_{i}(x)\|^2},
\end{equation}
which minimizes the right hand side of (\ref{t1ap}), and some
calculations we obtain
\begin{equation}\label{t2ap}
\|T_t(x)-z\|^2\leq \|x-z\|^2-\frac{\left(\sum_{i\in B_t}\omega_i
g_{i}^{+}(x)\right)^2}{N}
\end{equation}
where $N>0$, $x\in \mathbb{R}^n\backslash Fix T_t$ and $z\in Fix
T_t.$ Using (\ref{t2ap}) and this fact that $\{x | g^{+}_{i}(x)=
0,~i\in B_t\}\subseteq Fix T_t,$ we get $\|T_t(x)-z\|< \|x-z\|$
which means all $T_t$ of (\ref{tt1}) and (\ref{general1ap}) are
sQNE operators. Thus, for both cases (\ref{tt1}) and
(\ref{general1ap}) where $I^t=(t)$, see Definition
\ref{tarifstring}, the conditions ($i$) and ($ii$) of Theorem
\ref{theorem2} are satisfied.
\section{Numerical Results}\label{num}
We will report from tests using classical examples, the field of
image reconstruction from projections and randomly produced
examples. Throughout this section, we assume the relaxation
parameter of Algorithm \ref{algt1} is equal to one ($\lambda_k=1$)
and all strings are of length one.

We first report numerical tests for classical examples taken from
\cite{dos87} with larger sizes. In the first part of our numerical
tests, we assume four strings (i.e., $E=4$, see Definition
\ref{tarifstring}), $m=4$ blocks which are contained by $50$
convex functions, i.e., number of elements in each $B_t$ is $50$,
and therefore $M=200,$ see Section \ref{subgradient:section} for
the notations. Also in each block we use parallel subgradient
projection operator, defined by (\ref{general1ap}), with equal
weights, i.e., $w_i=1/50.$ The relaxation parameter $\mu_t$ is
defined by (\ref{mu}). In Table \ref{tab:0}, we give the results
related to examples (top-down) {\it Extended Powell singular
function,} {\it Chained Wood function,} {\it Extended Rosenbrock
function,} {\it Broyden tridiagonal function,} {\it Penalty
function} and {\it Variably dimensioned function} from
\cite{dos87} and \cite{testp}, see Section \ref{appendix} for more
details. 
In the table, where the number of variables are denoted by $n,$ we
report the number of iterations using extrapolation (ue) and
without extrapolation (we), i.e. $\sigma_{max}=1.$ In Table
\ref{tab:0}, the first and the second component of a pair shows
the number of iterations using stopping criteria $g_i^+(x)\leq
10^{-1}$ and $g_i^+(x)\leq 10^{-4}$ for
 all $i\in J$ respectively. In order to avoid ``division by zero'' when calculating
$\sigma_{max}$, we involve the criterion $\|T(x^k)-x^{k}\|^2\leq
10^{-10}$ where the operator $T$ is defined in Definition
\ref{tarifstring}. Results of Table \ref{tab:0} show that the
extrapolation strategy gives better results except the second
example, i.e, Chained Wood function, see the second row of Table
\ref{tab:0}. As it is mentioned in \cite{CC}, using extrapolation
strategy has local acceleration property and it does not guarantee
overall acceleration of the process. As it is seen in Table
\ref{tab:0}, changing the stopping criteria gives proper result
for this example. The reason may be due to having ``tiny'' set of
solution or local acceleration property of the extrapolation.

\begin{table}
\caption{ Results of classical examples \label{tab:0}}
\begin{tabular}{ccc}
\hline\noalign{\smallskip}
$n$ & (ue) & (we) \\
\noalign{\smallskip}\hline\noalign{\smallskip}
102 & (16,26) &(28,1054) \\
~68 &(4,189) &(29,127) \\
101 & (5,6) &(24,35) \\
200 &(3,3) & (23,38)\\
199 &(7,10) &(39,63) \\
198 & (10,16) & (40,53)\\
 \noalign{\smallskip} \hline
\end{tabular}
\end{table}
The second test is taken from the field of image reconstruction
from projections using the SNARK93 software package \cite{Browne}.
We work with the standard head phantom from \cite{herman1980}. The
phantom is discretized into $63\times 63$ pixels which satisfies
the linear system of equations $Ax = b$. We are using $16$
projections with $99$ rays per projection.

The resulting projection matrix $A$ has dimension $1376 \times
3969$, so that the system of equations is highly underdetermined.

Let $A$ and $b$ be partitioned into $16$ row blocks
$\{A_t\}_{t=1}^{16}$ and $\{b_t\}_{t=1}^{16}$ respectively. We use
Cimmino's $M-$matrix in Algorithm \ref{alg:alg2ap}. The relaxation
parameters, in Algorithm \ref{alg:alg2ap}, are chosen such that
each of them minimizes  $M_t-$weighted norm of residual in each
block, the convergence analysis of this strategy is investigated
in \cite{cauchy}. Also we consider sixteen strings, i.e., $E=16$
and use $U_t=T_t$ for $t=1,\cdots,16,$ see (\ref{t8ap}) and
Definition \ref{tarifstring}. Fig. \ref{phantom2} demonstrates
iteration history for the relative error using extrapolation and
without extrapolation within $50$ iterations.
 \begin{figure}
  \centerline{\includegraphics[width=9.cm,height=6cm]{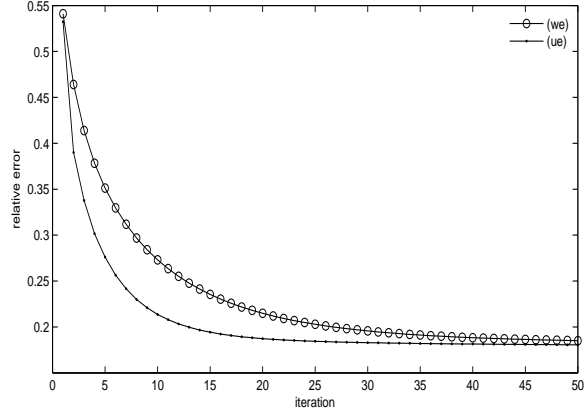}}
    \caption{ The history of relative error within $50$ iterations.
}\label{phantom2}
\end{figure}

We next examine $100$ nonlinear system of inequalities with $300$
variables which are produced randomly and each of them is
contained by $200$ convex functions. All randomly produced
matrices and vectors have entries in $[-10, 10].$ First we explain
how one of them is made. After generating the matrices
$G_i\in\mathbb{R}^{300\times 300}$ and the vectors
$c_i\in\mathbb{R}^{300}$ for $i=1,\cdots, 200$, we define the
following convex functions
$$f_i(x)=x^T G_i^T G_i x+c_i^Tx+d_i.$$
The vectors $d_i$ are calculated such that $f_i(y)\leq 0$ where
$y=(1, \cdots, 1)^T.$ Therefore the solution set
$$S=\left\{x|f_i(x)\leq 0,~~i=1,\cdots, 200\right\}$$
has at least one point. Similar to the first part of our tests, we
assume four strings, i.e. $E=4$, which are contained by $50$
convex functions, i.e., number of elements in each $B_t$ is $50$,
see Section \ref{subgradient:section}. Also in each block we use
parallel subgradient projection operator with equal weights and
the relaxation parameter $\mu_t$ is defined by (\ref{mu}).

 In addition, the iteration
is stopped when $f_i^+(x)\leq 10^{-4}$ for all $i\in J$ or
$\|T(x^k)-x^{k}\|^2\leq 10^{-10}.$ Table \ref{tab:2} explains the
mean value of iteration numbers. As it is seen, the iteration
number is reduced by (ue).

\begin{table}
\caption{ Results of $100$ nonlinear system of
inequalities \label{tab:2}}
\begin{tabular}{ccc}
\hline\noalign{\smallskip} method  & iteration averaging \\
\noalign{\smallskip}\hline\noalign{\smallskip}
(ue) & 8.49\\
(we)& 30.63\\\noalign{\smallskip}\hline
\end{tabular}
\end{table}

\section{Conclusion}
In this paper we consider an extrapolated version of string averaging
method, which is based on strictly quasi-nonexpansive operators.
Our analysis indicates that the generalized relaxation of cutter
operators is inherently able to provide more acceleration comparing
with results of \cite{CC}. As a special case of our algorithm, i.e.,
Algorithm \ref{algt1}, we consider a wide class of iterative
methods for solving linear systems of equations (inequalities) and
the subgradient projection method for solving nonlinear convex
feasibility problems. Our numerical tests show that using
extrapolation strategy we are able to reduce the number of iterations
to achieve a feasible point.


\nocite{*}


\vspace*{-10pt}
\section*{Appendix}\label{appendix}

In this part the classical test problems, used in Section
\ref{num}, will be explained. For positive integers $i$ and $l$,
we use the notations $div(i, l)$ for integer division, i.e.,
 the largest integer not greater than $i/l$, and $mod(i, l)$ for the
remainder after integer division, i.e., $mod(i, l) = l(i/l -
div(i, l)).$ Here $n$ denotes the number of variables.

\begin{enumerate}\footnotesize
  \item Extended Powell singular function
\begin{align*}
&g_{i}(x)=x_{j}+10x_{j+1},& mod(i,4)=1\\
&g_{i}(x)=\sqrt{5}(x_{j+2}-x_{j+3}),& mod(i,4)=2\\
&g_{i}(x)=(x_{j+1}-2x_{j+2})^{2},& mod(i,4)=3\\
&g_{i}(x)=\sqrt{10}(x_{j}-x_{j+3})^{2},& mod(i,4)=0\\
\hline
&x^{0}_{l}=3,&mod(l,4)=1\\
&x^{0}_{l}=-1,&mod(l,4)=2\\
&x^{0}_{l}=0,&mod(l,4)=3\\
&x^{0}_{l}=1,&mod(l,4)=0\\
\hline
&n=102&j=2div(i+3,4)-1.\\
\hline
\end{align*}

  \item {Chained Wood function}
  \begin{align*}
&g_{i}(x)=10(x_{j-1}^{2}-x_{j}),& mod(i,6)=1\\
&g_{i}(x)=x_{j-1}-1,& mod(i,6)=2\\
&g_{i}(x)=\sqrt{90}(x_{j+1}^{2}-x_{j+2}),& mod(i,6)=3\\
&g_{i}(x)=x_{j+1}-1,& mod(i,6)=4\\
&g_{i}(x)=\sqrt{10}(2-x_{j}-x_{j+2}),& mod(i,6)=5\\
&g_{i}(x)=\frac{1}{\sqrt{10}}(x_{j+2}-x_{j}),& mod(i,6)=0\\
\noalign{\smallskip}\hline
&x^{0}_{l}=-3,&mod(l,2)=1, l\leq 4\\
&x^{0}_{l}=-1,&mod(l,2)=0, l\leq 4\\
&x^{0}_{l}=-2,&mod(l,2)=1, l>4\\
&x^{0}_{l}=0,&mod(l,2)=0, l>4\\
\hline
&n=68&j=2(div(i,6)+1).\\
\hline
\end{align*}
  \item {Extended Rosenbrock function}
  \begin{align*}
&g_{i}(x)=10(x_{j}^2-x_{j+1}),& mod(i,2)=1\\
&g_{i}(x)=x_{j}-1,& mod(i,2)=0\\
\hline
&x^{0}_{l}=-1.2,&mod(l,2)=1\\
&x^{0}_{l}=-1,&mod(l,2)=0\\
\hline
&n=101&j=div(i+1,2).\\
\hline
\end{align*}
  \item {Broyden tridiagonal function}
    \begin{align*}
&g_{i}(x)=(3-2x_{j})-x_{j-1}-2x_{j+1}+1&\\
\hline
&x^{0}_{l}=-1,&l\geq 1\\
\hline
&n=200&x_{0}=x_{n+1}=0.\\
\hline
\end{align*}
  \item {Penalty function 1}
  \begin{align*}
&g_{i}(x)=x_{j}-1,&1\leq k \leq 199\\
&g_{i}(x)=\frac{1}{\sqrt{1000}}\sum_{i=1}^{n}(x_{j}^{2}-\frac{1}{4}),&k=200\\
\hline
&x^{0}_{l}=l,&l\geq 1\\
\hline
&n=199&x_{0}=x_{n+1}=0.\\
\hline
\end{align*}
  \item {Variably dimensioned function}
  \begin{align*}
&g_{i}(x)=x_{i}-1,& 1\leq i\leq n\\
&g_{i}(x)=\sum_{j=1}^{n} j(x_{j}-1),&i=n+1\\
&g_{i}(x)=\left(\sum_{j=1}^{n} j(x_{j}-1)^2\right)^2,& i=n+2\\
\noalign{\smallskip}\hline
&x^{0}_{l}=1-\frac{l}{198},&l\geq 1\\
\noalign{\smallskip}\hline
&n=198.\\
\hline
\end{align*}
\end{enumerate}


\end{document}